\newtheorem{thm}{Theorem}[section]
\newtheorem{lem}[thm]{Lemma}
\theoremstyle{definition}
\newtheorem{exam}[thm]{Example}
\theoremstyle{remark}
\newtheorem*{ack*}{Acknowledgment}
\numberwithin{equation}{section}
\numberwithin{figure}{section}
\newcommand{\cA}{\mathcal{A}}       % alphabet
\newcommand{\cF}{\mathcal{F}}       % forbidden words
\newcommand{\setN}{\mathbb{N}}
\newcommand{\setZ}{\mathbb{Z}}
\newcommand{\A}{\mathcal{A}}    %alphabet
\newcommand{\B}{\mathcal{B}}    %block
\newcommand{\V}{\mathcal{V}}    %vertex sets
\newcommand{\E}{\mathcal{E}}    %edge sets
\newcommand{\AG}{\mathsf{A}}    % adjacency matrix
\newcommand{\X}{\mathsf{X}}
\newcommand{\ifff}{if and only if}
\newcommand{\cto}{constant-to-one}
\newcommand{\fto}{finite-to-one}
\newcommand{\ito}{infinite-to-one}
\newcommand{\rr}{right resolving}
\newcommand{\lr}{left resolving}
\newcommand{\bir}{bi-resolving}
\newcommand{\rc}{right closing}
\newcommand{\lc}{left closing}
\newcommand{\bic}{bi-closing}
\newcommand{\SFT}{shift of finite type}
\newcommand{\SFTs}{shifts of finite type}
\newcommand{\rSFT}{irreducible shift of finite type}
\newcommand{\rSFTs}{irreducible shifts of finite type}
\newcommand{\mSFT}{mixing shift of finite type}
\newcommand{\mSFTs}{mixing shifts of finite type}
\newcommand{\onto}{\xymatrix{\ar@{>>}[r]&}}
\newcommand{\da}[4]{\xymatrix{#1 \ar@<.5ex>[r]^{#2} \ar@<-.5ex>[r]_{#3} & #4}}
\newcommand{\PhiV}{\Phi_\V}
\newcommand{\PhiVi}{\Phi_\V^{-1}}
\newcommand{\PhiE}{\Phi_\E}
\newcommand{\PhiEi}{\Phi_\E^{-1}}
\newcommand{\bPhiV}{{\bar \Phi}_\V}
\newcommand{\bPhiVi}{{\bar \Phi}_\V^{-1}}
\newcommand{\tPhi}{{\tilde \Phi}}
\newcommand{\bicv}{bi-covering}
\newcommand{\rcv}{right covering}
\newcommand{\lcv}{left covering}
\newcommand{\tr}{\mathsf{T}}
\begin{document}

\title[Bi-resolving homomorphisms and bi-closing codes]{Bi-resolving graph homomorphisms and \\ extensions of bi-closing codes}
\author{Uijin Jung}
\address{Department of Mathematical Sciences \\
	Korea Advanced Institute of Science and Technology \\
	Daejeon 305-701 \\
	South Korea}
\email{uijin@kaist.ac.kr}
\dedicatory{Dedicated to the memory of Ki Hang Kim.}
%\urladdr{http://niz.kaist.ac.kr}

\author{In-je Lee}
\email{ijlee@kaist.ac.kr}

\subjclass[2000]{Primary 37B10; Secondary 05C50, 05C70, 37B40}
\keywords{covering, resolving, subamalgamation matrix, covering extension, bi-closing, shift of finite type}

\maketitle

\begin{abstract}
    Given two graphs $G$ and $H$, there is a \bir\ (or \bicv) graph homomorphism from $G$ to $H$ \ifff\ their adjacency matrices satisfy certain matrix relations.
    We investigate the \bicv\ extensions of \bir\ homomorphisms and give several sufficient conditions for a \bir\ homomorphism to have a \bicv\ extension with an irreducible domain.
    Using these results, we prove that a \bic\ code between subshifts can be extended to an $n$-to-1 code between \rSFTs\ for all large $n$.
\end{abstract}

\section{Introduction}
Resolving homomorphisms arose independently under different names in different fields of mathematics. These homomorphisms were introduced in the field of symbolic dynamics to solve the finite equivalence problem \cite{AdlGW, AdlM} and they form a fundamental class of finite-to-one codes between subshifts. In particular, all the known general constructions of \fto\ factor codes between \rSFTs\ with equal entropy use resolving codes \cite{Ash93, Boy08, LM}. Covering homomorphisms, resolving ones with the lifting property, are closely related to the graph divisors and equitable partitions in the theory of spectra of graphs (\cite{DenSW} and its references). They also appear in the categorical approach of graph fibration with the name of fibrations and opfibrations \cite{BolV}, and play a significant role in the theory of graph embeddings as ``voltage graphs" \cite{GroT77, GroT}.

\vspace{0.1cm}

This paper is an attempt to investigate the existence and the extension of \bir\ homomorphisms, i.e., both left and right resolving ones. They have more rigid structure than left or right resolving ones \cite{Nas}. Even if two graphs $G$ and $H$ admit a \lcv\ homomorphism and a \rcv\ homomorphism between them, they need not admit a \bicv\ one. We show that there is a \bir\ (resp. \bicv) homomorphism from a graph $G$ to another graph $H$ \ifff\ there is a subamalgamation matrix $S$ such that $ \AG_G S \leq S \AG_H$ and $S^\tr \AG_G \leq \AG_H S^\tr$ (resp. $ \AG_G S = S \AG_H$ and $S^\tr \AG_G = \AG_H S^\tr$), where $\AG_G$ and $\AG_H$ are the adjacency matrices of $G$ and $H$, respectively (see Theorems \ref{thm:bicovering_iff} and \ref{thm:biresolving_iff}).
These results can be considered as an analogue of the well-known description that there is a \rr\ homomorphism from $G$ to $H$ \ifff\ there is a subamalgamation matrix $S$ such that $ \AG_G S \leq S \AG_H$ (e.g., in \cite{LM}). We also investigate \bicv\ extensions of \bir\ homomorphisms in \S 3. Every \bir\ homomorphism $\Phi : G \to H$ can be extended to a \bicv\ homomorphism $\tilde \Phi : \tilde G \to H$ by enlarging the domain. We present sufficient conditions for $\tilde G$ to be irreducible when $H$ is irreducible (see Theorem \ref{thm:graph_extension}).

\vspace{0.1cm}

There has been considerable attention on extending sliding block codes in symbolic dynamics. One can consider the following extension problem: Given a code $\phi : X \to Y$ and $\tilde X \supset X$ with certain properties, extend $\phi$ to a factor code from $\tilde X$ onto $Y$, respecting the properties. There are several results to this extension problem for \ito\ codes \cite{Boy83, BoyT, Jung2}, for \fto\ closing codes \cite{Ash93}, and for inert automorphisms \cite{KimR91}. In particular, Ashley proved that if $X$ and $Y$ are \mSFTs\ and $Y$ is a \rc\ factor of $X$, then any \rc\ code from a \SFT\ $Z \subsetneq X$ can be extended to a \rc\ code from $X$ to $Y$ \cite{Ash93}. An analogous statement for \bic\ codes is false (see Example \ref{ex:biclosing_extension_fail}), thus we are led to consider the weaker version of the extension problem: Given a code $\phi : X \to Y$ with certain properties, construct an enlarged domain $\tilde X$ and extend $\phi$ to a factor code on $\tilde X$, respecting the properties.

The paper \cite{AshMPT} provides many results to this weaker version of the extension problem. One of them concerns \bic\ codes: If $\phi : X \to Y$ is a \bic\ code between \rSFTs, then there are an \rSFT\ $\tilde X$ and a \bic\ extension $\tilde \phi : \tilde X \to Y$ of $\phi$. In \S 4 we extend this result as follows: Given a \bic\ code from a subshift $X$ to an \rSFT\ $Y$ with $h(X) < h(Y)$, for all large $n$ there exist an \rSFT\ $\tilde X$ and an $n$-to-1 (hence \bic) extension from $\tilde X$ onto $Y$. If $X$ is of finite type, then this result holds for every $n$ greater than the maximum number of $\phi$-preimages (see Theorem \ref{thm:biclosing_extension}). This is related to the result in \cite{KimR97}, which says that for a \mSFT\ $X$, there is a family of \mSFTs\  each of which is a \cto\ extension of $X$. (In \cite{KimR97}, each extension is a skew-product of $X$ with a group of the form $\setZ/p\setZ$).

\section{Background}

In this section, we recall some terminology and elementary results. For further details, see \cite{LM}.
A (\emph{directed}) \emph{graph} $G$ is defined to be a pair $(\V, \E)$, where $\V = \V(G)$ is a finite set of vertices and $\E = \E(G)$ is a finite set of edges. We call $G$ \emph{irreducible} if for each pair $(I, J)$ of vertices there exists a path from $I$ to $J$. A graph is \emph{weakly connected} if its underlying graph is connected (i.e., it is possible to reach any vertex starting from any other vertex by traversing edges in some direction). An \emph{irreducible component} of a graph is a maximal irreducible subgraph.

\vspace{0.1cm}

Let $G$ and $H$ be graphs. A (\emph{graph}) \emph{homomorphism} from $G$ into $H$ is a pair $\Phi = (\PhiV, \PhiE)$ of mappings $\PhiV : \V(G) \to \V(H)$ and $\PhiE : \E(G) \to \E(H)$ which respect adjacency. The edge map $\PhiE$ naturally extends to paths. %We write it simply as $\Phi : G \to H$.
We say $\Phi : G \to H$ is \emph{\rr} (resp. \emph{\rcv}) if $\PhiE|_{\E_I(G)} : \E_I(G) \to \E_{\Phi_\V(I)}(H)$ is injective (resp. bijective) for each $I \in \V(G)$, where $\E_I(G)$ is the set of all edges starting from $I$. Similarly, \emph{\lr} and \emph{\lcv} homomorphisms can be defined. If $\Phi$ is both left and right resolving (resp. covering) then it is called \emph{\bir} (resp. \emph{\bicv}).
If $\Phi : G \to H$ is \bicv, then for each path $\pi$ in $H$ the paths in $\PhiE^{-1}(\pi)$ are \emph{mutually separated}, i.e., they do not share a vertex at the same time. %If $H$ is weakly connected and $d = |\V(G)|/|\V(H)|$, then there are exactly $d$ paths in $\PhiE^{-1}(\pi)$. Moreover if $H$ is irreducible, then $G$ must be a disjoint union of irreducible components.

A 0-1 matrix is called a \emph{subamalgamation matrix} if it has exactly one 1 in each row. An \emph{amalgamation matrix} is a subamalgamation matrix which has at least one 1 in each column. For two graphs $G$ and $H$, any subamalgamation matrix $S$ indexed by $\V(G) \times \V(H)$ uniquely determines a vertex mapping $\PhiV$ (and vice versa) by letting $\PhiV(i)=I$ if and only if $S_{i,I}=1$.

\vspace{0.1cm}

In \S 4, we apply the results on graph homomorphisms to obtain certain results in symbolic dynamics. We assume some familiarity with symbolic dynamics. See \cite{Kit, LM} for more on symbolic dynamics.

\section{Existence and extension of bi-resolving homomorphisms}

In this section, we investigate the existence and the extension of \bir\ homomorphisms. We first show that a known necessary condition for the existence of a bi-resolving (resp. bi-covering) homomorphism is also sufficient.

\begin{thm} \label{thm:bicovering_iff}
    Let $G$ and $H$ be graphs. Then there exists a \bicv\ homomorphism from $G$ to $H$ \ifff\ there exists a subamalgamation matrix $S$ with $\AG_G S = S \AG_H \text{ and } S^\tr \AG_G = \AG_H S^\tr.$
\end{thm}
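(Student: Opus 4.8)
The plan is to prove the two implications separately. The forward implication — a \bicv\ homomorphism $\Phi=(\PhiV,\PhiE):G\to H$ forces the existence of such an $S$ — is a routine count: take for $S$ the subamalgamation matrix determined by $\PhiV$. For vertices $i$ of $G$ and $J$ of $H$, the $(i,J)$-entry of $\AG_G S$ is the number of edges of $G$ from $i$ into $\PhiV^{-1}(J)$, while the $(i,J)$-entry of $S\AG_H$ is the number of edges of $H$ from $\PhiV(i)$ to $J$. Since $\PhiE$ respects adjacency and, by \rcv-ness, restricts to a bijection $\E_i(G)\to\E_{\PhiV(i)}(H)$, it carries the first family of edges bijectively onto the second, so $\AG_G S = S\AG_H$; the \lcv\ property gives $S^\tr\AG_G=\AG_H S^\tr$ in the same way, using incoming edges.

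The substance is the converse: given a subamalgamation matrix $S$ with $\AG_G S = S\AG_H$ and $S^\tr\AG_G=\AG_H S^\tr$, let $\PhiV$ be the induced vertex map and build $\PhiE$. I would do this one ordered pair $(I,J)$ of vertices of $H$ at a time. Fix such a pair, set $P=\PhiV^{-1}(I)$, $Q=\PhiV^{-1}(J)$ and $m=(\AG_H)_{I,J}$, and form the bipartite multigraph $B_{I,J}$ with parts $P$ and $Q$ whose edges between $p\in P$ and $q\in Q$ are exactly the edges of $G$ from $p$ to $q$. The $(p,J)$-entry of $\AG_G S = S\AG_H$ says each $p\in P$ has degree $m$ in $B_{I,J}$, and the $(I,q)$-entry of $S^\tr\AG_G=\AG_H S^\tr$ says each $q\in Q$ has degree $m$ in $B_{I,J}$ (so $|P|=|Q|$ whenever $m>0$). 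Thus $B_{I,J}$ is an $m$-regular bipartite multigraph, and the classical fact — iterated Hall's theorem, equivalently König's edge-colouring theorem — decomposes its edge set into perfect matchings $M_1,\dots,M_m$. Listing the edges of $H$ from $I$ to $J$ as $f_1,\dots,f_m$, put $\PhiE(e)=f_\ell$ for every $e\in M_\ell$. Carried out over all pairs $(I,J)$, this defines $\PhiE$ on all of $\E(G)$ without conflict, since each edge of $G$ joins $\PhiV^{-1}(I)$ to $\PhiV^{-1}(J)$ for exactly one pair $(I,J)$.

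It then remains to check $\Phi=(\PhiV,\PhiE)$ is \bicv. It is a homomorphism because an edge from $j$ to $k$ is assigned an edge of $H$ from $\PhiV(j)$ to $\PhiV(k)$. For \rcv-ness, fix a vertex $p$ of $G$ with $I=\PhiV(p)$: for each vertex $J$ of $H$, the edges out of $p$ landing in $\PhiV^{-1}(J)$ are precisely the edges at $p$ in $B_{I,J}$, and since these meet each $M_\ell$ exactly once they are mapped bijectively onto $\{f_1,\dots,f_m\}$, the full set of edges of $H$ from $I$ to $J$; taking the union over $J$ yields a bijection $\E_p(G)\to\E_{\PhiV(p)}(H)$. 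The symmetric argument with the matchings at each vertex $q$ of $G$ gives \lcv-ness, so $\Phi$ is \bicv.

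I do not expect a genuine obstacle. The only nontrivial tool is the perfect-matching decomposition of a regular bipartite multigraph, and the key point is that the two matrix identities in the hypothesis are exactly the assertion that every $B_{I,J}$ is regular — precisely the input that decomposition needs. What is left is bookkeeping: keeping the pairs $(I,J)$ mutually consistent, handling the trivial case $m=0$, and noting that $\PhiV$ need not be surjective, so that vertices of $H$ outside its image simply contribute empty graphs $B_{I,J}$, which is harmless.
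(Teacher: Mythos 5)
Your proposal is correct and is essentially the paper's own argument in graph-theoretic rather than matrix language: the decomposition of the $m$-regular bipartite multigraph $B_{I,J}$ into perfect matchings via K\H{o}nig's theorem is exactly the paper's decomposition of the square submatrix $A_{I,J}$ (with all row and column sums equal to $b_{I,J}$) into $b_{I,J}$ permutation matrices, and the subsequent assignment of matchings to the edges of $H$ from $I$ to $J$ is the same. The forward direction and the final verification of the covering property also match the paper's treatment.
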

\begin{proof}
    The `only if' part is well known \cite[\S 8.2]{LM}. We will show the converse.

    Let $\AG_G = ( a_{i,j} ), \AG_H = ( b_{I,J} )$ and $S = ( S_{i,I})$. Let $\PhiV : \V(G) \to \V(H)$ be the vertex mapping induced by $S$, i.e., for each $i \in \V(G)$, $\PhiV(i)$ is a unique vertex $I \in \V(H)$ with $S_{i,I} = 1$. Let $\V_I = \PhiVi(I)$ for $I \in \V(H)$. For $I, J \in \V(H)$ with $\V_I, \V_J$ nonempty, let $A_{I,J} = (a_{i,j})_{i \in \V_I, j \in \V_J}$ be the (rectangular) submatrix of $\AG_G$, and let $G_{I,J}$ be the subgraph of $G$ determined by $A_{I,J}$, i.e., $G_{I,J}$ has the vertex set $\V_I \cup \V_J$ and its edge set, say $\E_{I,J}$, which is the set of edges going from a vertex in $\V_I$ to a vertex in $\V_J$. Since $\AG_G S = S \AG_H$, it follows that $\sum_{j \in \V_J} a_{i,j} = b_{I,J}$ for $I, J \in \V(H)$ and $i \in \V_I$, which implies that if $\V_I \neq \emptyset$ and $\V_J \neq \emptyset$, then every row sum of the matrix $A_{I,J}$ is equal to $b_{I,J}$, and that if $\V_I \neq \emptyset$ and $\V_J = \emptyset$, then $b_{I,J} = 0$. Similarly, since $S^\tr \AG_G = \AG_H S^\tr$, we see that if $\V_I \neq \emptyset$ and $\V_J \neq \emptyset$, then every column sum of the matrix $A_{I,J}$ is equal to $b_{I,J}$, and that if $\V_I = \emptyset$ and $\V_J \neq \emptyset$, then $b_{I,J} = 0$. Therefore, if $\V_I \neq \emptyset, \V_J \neq \emptyset$ and $b_{I,J} \neq 0$, then $|\V_I| = |\V_J|$, so that $A_{I,J}$ is a nonnegative integral square matrix with every row and column sum equal to $b_{I,J}$. It is well known that a nonnegative integral square matrix with every row and column sum equal to $R$ is the sum of $R$ permutation matrices (e.g. \cite[\S 5]{LinW}). Therefore $A_{I,J}$ is the sum of $b_{I,J}$ permutation matrices, so that $\E_{I,J}$ is partitioned into disjoint $b_{I,J}$ subsets each of which consists of \emph{vertex-separated} $|\V_I|$ edges (i.e. every distinct two of the $|\V_I|$ edges go neither from the same vertex nor to the same vertex). Hence we can define a graph homomorphism $\Phi_{I,J} : G_{I,J} \to H$ which sends all edges in every one of the $b_{I,J}$ subsets to some one of the $b_{I,J}$ edges going from $I$ to $J$ in $H$ so that the edges in distinct subsets may be sent to distinct edges.

    There exists a graph homomorphism $\Phi : G \to H$ which is an extension of $\Phi_{I,J}$ for all $I, J \in \V(H)$ with $b_{I,J} \neq 0$. It follows that $\Phi$ is \bicv.
\end{proof}

\begin{thm} \label{thm:biresolving_iff}
    Let $G$ and $H$ be graphs. Then there exists a bi-resolving homomorphism from $G$ to $H$ \ifff\ there exists a subamalgamation matrix $S$ with $\AG_G S \leq S \AG_H \text{ and } S^\tr \AG_G \leq \AG_H S^\tr.$
\end{thm}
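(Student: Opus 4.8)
The plan is to handle the `only if' direction by a direct computation and the `if' direction by reducing to Theorem~\ref{thm:bicovering_iff} after enlarging the domain. For necessity, let $\Phi\colon G\to H$ be \bir, and let $S$ be the subamalgamation matrix with $S_{i,I}=1$ iff $\PhiV(i)=I$. With $\V_I=\PhiVi(I)$, $\AG_G=(a_{i,j})$ and $\AG_H=(b_{I,J})$, the $(i,J)$-entry of $\AG_G S$ is $\sum_{j\in\V_J}a_{i,j}$, the number of edges of $G$ out of $i$ whose $\PhiE$-image terminates at $J$, while the $(i,J)$-entry of $S\AG_H$ equals $b_{\PhiV(i),J}$. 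Since $\Phi$ is \rr, the former edges are sent injectively to edges of $H$ from $\PhiV(i)$ to $J$, so $\AG_G S\leq S\AG_H$; applying the same argument to the edges of $G$ into $i$ (using that $\Phi$ is \lr) gives $S^\tr\AG_G\leq\AG_H S^\tr$. This mirrors the familiar \rr\ statement and the necessity half of Theorem~\ref{thm:bicovering_iff}, so I would keep it brief.

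For sufficiency, suppose $S$ is a subamalgamation matrix with $\AG_G S\leq S\AG_H$ and $S^\tr\AG_G\leq\AG_H S^\tr$; let $\PhiV$ be the induced vertex map and $\V_I=\PhiVi(I)$. Exactly as in the proof of Theorem~\ref{thm:bicovering_iff}, the two inequalities say precisely that the rectangular block $A_{I,J}=(a_{i,j})_{i\in\V_I,\,j\in\V_J}$ of $\AG_G$ has every row sum and every column sum at most $b_{I,J}$. Put $N=\max_{I\in\V(H)}|\V_I|$. I would build a graph $\hat G\supseteq G$ in two steps. First, adjoin $N-|\V_I|$ fresh vertices to each fibre $\V_I$, obtaining fibres $\hat\V_I$ of common size $N$, and extend $\PhiV$ to a vertex map $\hPhiV$ sending each fresh vertex over $I$ to $I$. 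Second, adjoin fresh edges so that, for every $I,J$, the submatrix $\hat A_{I,J}$ of $\AG_{\hat G}$ indexed by $\hat\V_I\times\hat\V_J$ is entrywise $\geq A_{I,J}$ on its $\V_I\times\V_J$ subblock and has every row and column sum equal to $b_{I,J}$. Such an $\hat A_{I,J}$ exists because the required surplus row margins (namely $b_{I,J}-\sum_{j\in\V_J}a_{i,j}$ on the rows indexed by $\V_I$, which is nonnegative by hypothesis, and $b_{I,J}$ on the fresh rows) and the analogous surplus column margins are nonnegative integers whose totals both equal $Nb_{I,J}-\sum_{i\in\V_I,\,j\in\V_J}a_{i,j}$; hence a nonnegative integer $N\times N$ matrix with exactly those margins exists, and we add it to the matrix having $A_{I,J}$ in its $\V_I\times\V_J$ block and zeros elsewhere. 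Keeping all original edges of $G$ makes $G$ a subgraph of $\hat G$, and, writing $\hat S$ for the subamalgamation matrix of $\hPhiV$, a block-by-block count of row and column sums gives $\AG_{\hat G}\hat S=\hat S\AG_H$ and $\hat S^\tr\AG_{\hat G}=\AG_H\hat S^\tr$.

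By Theorem~\ref{thm:bicovering_iff} there is then a \bicv\ homomorphism $\hPhi\colon\hat G\to H$ whose vertex map is induced by $\hat S$, so that $\hPhiV|_{\V(G)}=\PhiV$. Restrict $\hPhi$ to $G$. For each vertex $i$ of $G$, the edges of $G$ out of $i$ form a subset of the edges of $\hat G$ out of $i$, on which $\hPhiE$ is a bijection onto $\E_{\hPhiV(i)}(H)$; hence $\hPhiE$ is injective on the edges of $G$ out of $i$, mapping them into $\E_{\PhiV(i)}(H)$, so $\hPhi|_G$ is \rr. Symmetrically, using edges of $G$ into $i$, it is \lr. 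Therefore $\hPhi|_G\colon G\to H$ is a \bir\ homomorphism, as required.

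The step I expect to be the genuine obstacle is recognizing that the domain must be enlarged by adding \emph{vertices}, not merely edges: because the fibres $\V_I$ may have different sizes, the rectangular blocks $A_{I,J}$ cannot in general be completed to square matrices with all line sums equal to $b_{I,J}$ while the vertex set is held fixed, so one must first equalize all fibres to a common size $N$ and only then complete the blocks (and verify that the completion always exists). The completion fact itself is elementary and classical, in the same spirit as the decomposition into permutation matrices invoked in the proof of Theorem~\ref{thm:bicovering_iff} (cf.\ \cite{LinW}).
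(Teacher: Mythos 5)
Your proposal is correct and follows essentially the same route as the paper: pad each fibre with isolated vertices to a common size, complete each block $A_{I,J}$ to a nonnegative integer matrix with all row and column sums equal to $b_{I,J}$, invoke Theorem~\ref{thm:bicovering_iff} to get a \bicv\ homomorphism on the enlarged graph, and restrict to $G$. The only cosmetic difference is that you justify the completion step via the existence of a nonnegative integer matrix with prescribed (equal-total) margins, whereas the paper adds the surplus ``1''s one at a time by induction on the deficit; these are equivalent elementary facts.
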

\begin{proof}
    As in Theorem \ref{thm:bicovering_iff}, the `only if' part is well known \cite[\S 8.2]{LM}. We will show the converse. Notation being the same as in the proof above, let $d = \max \{ |\V_I| : I \in \V(H) \}$. Let $\bar G$ and $\bPhiV : \V(\bar G) \to \V(H)$ be the extensions of $G$ and $\Phi_\V$, respectively, such that $\bar G$ is obtained from $G$ by adding new $d - |\V_I|$ (isolated) vertices, which are mapped to $I$ by $\bPhiV$, for every $I \in \V(H)$. Then $|\V(\bar G)| = d |\V(H)|$. If we define $\bar S = ({\bar S}_{i,I})$ to be the $|\V(\bar G)| \times |\V(H)|$ $0$-$1$ matrix such that $\bPhiV(i) = I$ \ifff\ ${\bar S}_{i,I} = 1$, then $\bar S$ is an amalgamation matrix with $\AG_{\bar G} \bar S \leq \bar S \AG_H$ and ${\bar S}^\tr \AG_{\bar G} \leq \AG_H {\bar S}^\tr$.

    Let $\bar \V_I = \bPhiVi(I)$ for $I \in \V(H)$. Let $\bar A_{I,J} = (\bar a_{i,j})_{i \in \bar \V_I, j \in \bar \V_J}$ be the $d \times d$ submatrix of $\AG_{\bar G} = (\bar a_{i,j})$ for $I, J \in \V(H)$. Then since $\AG_{\bar G} \bar S \leq \bar S \AG_H$, it follows that $\sum_{j \in \bar \V_J} \bar a_{i,j} \leq b_{I,J}$ for $I, J \in \V(H)$ and $i \in \bar \V_I$, so that each row sum of $\bar A_{I,J}$ is not greater than $b_{I,J}$ for each $I, J \in \V(H)$. Similarly, since ${\bar S}^\tr \AG_{\bar G} \leq \AG_H {\bar S}^\tr$, each column sum of $\bar A_{I,J}$ is not greater than $b_{I,J}$ for each $I, J \in \V(H)$. We can obtain from $\bar A_{I,J}$ a nonnegative integral $d \times d$ matrix $\hat A_{I,J}$ with every row and column sum equal to $b_{I,J}$ by adding a necessary number of ``1"s to the components of $\bar A_{I,J}$ (i.e. by adding a necessary number of new edges going from $\bar \V_I$ to $\bar \V_J$). For generally, if $A$ is a nonnegative integral $d \times d$ matrix with every row and column sum not greater than $b$ and with the total sum of components equal to $t$, then $bd - t$ times additions of ``1" to an appropriate component each time, give a matrix $\hat A$ with every row and column sum equal to $b$. (This is straightforwardly proved by induction on $bd - t$.) Let $\hat G_{I,J}$ be the graph determined by $\hat A_{I,J}$. There exists a minimal extension $\hat G$ of $G$ such that $\hat G_{I,J}$ is a subgraph of $\hat G$ for all $I, J \in \V(H)$. Since $\hat A_{I,J}$ has every row and column sum equal to $b_{I,J}$ for all $I, J \in \V(H)$, it follows from the proof of Theorem \ref{thm:bicovering_iff} that there exists a \bicv\ homomorphism $\hat \Phi : \hat G \to H$. The restriction of $\hat \Phi$ on $G$ is a desired bi-resolving homomorphism.
\end{proof}

\vspace{0.1cm}

An \emph{extension} of a homomorphism $\Phi : G \to H$ is a homomorphism $\tilde \Phi : \tilde G \to H$ such that $\tilde G$ is a graph containing $G$ and $\tilde \Phi|_{G} = \Phi$. In the remainder of this section, we investigate \bicv\ extensions of \bir\ homomorphisms. In what follows, the \emph{degree} of a homomorphism $\Phi : G \to H$, denoted by $\deg \Phi$, is the maximum number of preimages of vertices in $H$ under $\PhiV$. For a graph $G$, denote by $\lambda_G$ the spectral radius of its adjacency matrix $\AG_G$.

\begin{thm}\label{thm:graph_extension}
    Let $\Phi : G \to H$ be a bi-resolving homomorphism with $H$ irreducible. Let $d = \deg \Phi$.
    \begin{enumerate}
        \item If $G$ is weakly connected, then there exists a \bicv\ extension $\tilde \Phi : \tilde G \to H$ of $\Phi$ with $\tilde G$ irreducible and $\deg \tPhi = d$.
        \item If $\lambda_H > \lambda_G$ and $n > d$, then there exists a \bicv\ extension $\tilde \Phi : \tilde G \to H$ of $\Phi$ with $\tilde G$ irreducible and $\deg \tPhi = n$.
    \end{enumerate}
\end{thm}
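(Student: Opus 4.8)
The plan is to reduce everything to the \bicv\ case via the construction in the proof of Theorem~\ref{thm:biresolving_iff}, and then to control irreducibility by a spectral‑radius comparison. Two preliminary facts do most of the work. First: for any integer $N\ge\deg\Phi$, if one pads each fibre $\Phi_\V^{-1}(I)$ up to size $N$ with isolated vertices and then completes the fibre submatrices to matrices with all row and column sums equal to $b_{I,J}$ (exactly as in the proof of Theorem~\ref{thm:biresolving_iff}), one obtains a \bicv\ extension $\hPhi:\hat G\to H$ of $\Phi$ with $\deg\hPhi=N$. Second, the following lemma. \emph{Lemma: let $\Psi:K\to H$ be \bicv\ with $H$ irreducible; then $\lambda_{K'}=\lambda_H$ for every weakly connected component $K'$ of $K$, and if $K$ is weakly connected then $K$ is irreducible.}

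To prove the lemma, note that right‑coveringness gives $\AG_{K'}S_{K'}=S_{K'}\AG_H$, where $S_{K'}$ is the subamalgamation matrix of $\Psi|_{K'}$; applying this to the Perron eigenvector $v>0$ of $\AG_H$ exhibits $S_{K'}v>0$ as an eigenvector of $\AG_{K'}$ with eigenvalue $\lambda_H$, so $\lambda_{K'}=\lambda_H$. Now suppose $K$ is weakly connected but reducible, and choose a terminal irreducible component $K_0$ (no edge of $K$ goes from $K_0$ to a different component); weak connectedness then provides an edge of $K$ entering $K_0$ from outside. Every $i\in K_0$ has $\E_i(K_0)=\E_i(K)$, so $\Psi|_{K_0}$ is still right‑covering and surjective onto $\V(H)$, whence $\lambda_{K_0}=\lambda_H$ as above. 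On the in‑side $\Psi|_{K_0}$ is only left‑resolving, and the edge entering $K_0$ makes $S_{K_0}^\tr\AG_{K_0}\le\AG_H S_{K_0}^\tr$ with a strictly smaller entry; multiplying by a Perron eigenvector of $\AG_{K_0}$ then produces a positive vector $x$ with $\AG_H x\ge\lambda_H x$ and $\AG_H x\neq\lambda_H x$, which is impossible for an irreducible matrix of spectral radius $\lambda_H$.

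For both parts I would build $\hPhi:\hat G\to H$ as above (with $N=d$ in~(1) and $N=n$ in~(2)) and then merge the weak components of $\hat G$ one at a time by the following \emph{swap}: if two \emph{added} edges (edges of $\hat G$ not in $G$) $u_1\to v_1$ and $u_2\to v_2$ lie over the same edge of $H$ but in two different components, replace them by $u_1\to v_2$ and $u_2\to v_1$; this leaves $\hPhi$ \bicv, leaves every edge of $G$ untouched, and merges the two components. Suppose at some stage no swap is available while $\hat G$ is still disconnected. A component carrying no added edge consists of original vertices only, hence is a weak component of $G$ on which $\hPhi$ restricts to a \bicv\ homomorphism, and so has spectral radius $\lambda_H$. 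In part~(2) this contradicts $\lambda_H>\lambda_G$, so every component carries an added edge; since $n>d$ every vertex $I$ of $H$ has an added preimage, and every out‑edge of an added vertex is itself an added edge (and there is one over each edge of $H$ issuing from $I$), so ``no swap'' forces all added preimages of $I$ into one component; chasing the out‑ and in‑edges at added vertices through the irreducible $H$ then puts every added vertex in a single component $D^*$, which therefore carries an added edge over every edge of $H$, leaving none for the remaining components --- a contradiction. In part~(1), weak connectedness of $G$ puts all of $G$ in one component $\hat G^{(1)}$; any other component consists of added vertices only, hence (by the lemma) is irreducible, surjects onto $\V(H)$, and carries an added edge over every edge of $H$, so ``no swap'' forces it to be the only other component and $\hat G^{(1)}=G$, whence $\Phi$ is already \bicv\ and $G$ is irreducible by the lemma. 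Either way we reach $\hat G$ weakly connected (or $G$ itself in~(1)), swaps having never changed the degree, and the lemma yields $\tPhi:\tilde G\to H$ with $\tilde G$ irreducible and degree $d$ (resp. $n$).

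The conceptual core is the lemma --- a Perron--Frobenius estimate on a terminal component. The step I expect to cost the most care is the ``no swap'' analysis: turning ``no swap is available'' into the statement that the added edges must concentrate in a single weak component, together with, in part~(1), the bookkeeping that a purely added component is irreducible (via the lemma) and hence surjective onto $H$. That a swap preserves the \bicv\ property and the matchings induced by $G$ is immediate but should be recorded.
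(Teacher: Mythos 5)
Your proposal is essentially correct and follows the paper's strategy: enlarge each fibre, add new edges over each edge $b$ of $H$ until every $b$-fibre is a full set of vertex-separated edges, observe that the resulting \bicv\ homomorphism onto the irreducible $H$ has all weak components irreducible, and merge components by exchanging terminal vertices of new edges lying over the same edge of $H$. Two genuine differences are worth noting. First, the paper merely asserts that the domain of a \bicv\ homomorphism onto an irreducible graph is a disjoint union of irreducible graphs; your Lemma supplies a correct proof (the identity $\AG_{K'}S_{K'}=S_{K'}\AG_H$ for a weak component of a right covering, plus the Perron--Frobenius subinvariance argument on a terminal component, is exactly what is needed). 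Second, for part (2) the paper reduces to $n=d+1$, adjoins a disjoint copy of $H$ all of whose edges are ``new'', and absorbs the other components into it one at a time; you instead pad the fibres to size $n$ at the outset and show that the greedy swapping process cannot stall while the graph is disconnected. Your ``no swap'' analysis is sound: since $n>d$ every $I\in\V(H)$ has an added preimage, every out-edge (and in-edge) of an added vertex is an added edge, and chasing these through the irreducible $H$ does force all added vertices into one component, which then monopolises the added edges. The paper's seeding with a copy of $H$ buys a shorter, constructive merge; your version avoids the reduction to $n=d+1$ at the cost of the extra combinatorial case analysis.

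One step needs repair. Completing the fibre matrices ``exactly as in the proof of Theorem \ref{thm:biresolving_iff}'' --- raising the row and column sums of $\bar A_{I,J}$ to $b_{I,J}$ and then decomposing into permutation matrices --- does not automatically yield an extension of the \emph{given} $\Phi$: the permutation decomposition of the completed matrix may be incompatible with the existing labels $\PhiE$. For instance, with $b_{I,J}=2$, fibres $\{1,2\}$ and $\{3,4\}$, and old edges $1\to3$ over $b_1$ and $2\to4$ over $b_2$, the completion $\left(\begin{smallmatrix}1&1\\1&1\end{smallmatrix}\right)$ admits no decomposition into two permutation matrices respecting these labels. The completion must be performed per edge $b$ of $H$: extend each partial permutation $\PhiE^{-1}(b)$ separately to a full permutation by adding new $b$-edges (allowing parallel edges), which is precisely what the paper's proof does. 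With that adjustment the remainder of your argument goes through unchanged, since your swaps only ever touch added edges.
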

\begin{proof}
    Let $\V_I = \PhiVi(I)$ for $I \in \V(H)$. We may assume that $|\V_I| = d$ by adding new $d - |\V_I|$ (isolated) vertices for every $I \in \V(H)$. Further, we may assume that $n = d + 1$ in (2) by adding new $n - d - 1$ vertices for every $I \in \V(H)$. For $I, J \in \V(H)$, let $B_{I,J}$ the set of edges going from $I$ to $J$.

    Let $I, J \in \V(H)$ with $B_{I,J} \neq \emptyset$. Let $G_{I,J}$ be the subgraph of $G$ whose vertex set is $\V_I \cup \V_J$ and whose edge set, say $\E_{I,J}$, is the set of all edges going from a vertex in $\V_I$ to a vertex in $\V_J$. Note that $\E_{I,J}$ may be empty. Let $\Phi_{I,J} : G_{I,J} \to H$ be the restriction of $\Phi$ on $G_{I,J}$. Since $\Phi$ is \bir, for each $b \in B_{I,J}$, $\PhiE^{-1}(b)$ consists of vertex-separated edges. For each $b \in B_{I,J}$, if $|\PhiE^{-1}(b)| < d$, we can add new $d - |\PhiE^{-1}(b)|$ edges, which will be called the \emph{new} $b$-edges, to the graph $G_{I,J}$ so that the \emph{new} $b$-edges together with the edges in $\PhiE^{-1}(b)$, which we will call the \emph{old} $b$-edges, may be all vertex-separated. Let $\hat G_{I,J}$ be the graph extension of $G_{I,J}$ with the \emph{new} $b$-edges added for all $b \in B_{I,J}$. Let $\hat \Phi_{I,J} : \hat G_{I,J} \to H$ be the extension of $\Phi_{I,J}$ which sends all \emph{old} and \emph{new} $b$-edges to $b$ for all $b \in B_{I,J}$.

    There exists a minimal extension $\hat G$ of $G$ such that $\hat G_{I,J}$ is a subgraph of $\hat G$ for all $I, J \in \V(H)$ with $B_{I,J} \neq \emptyset$. There exists an extension $\hat \Phi : \hat G \to H$ of $\Phi$ whose restriction of $\hat \Phi$ on $\hat G_{I,J}$ is $\hat \Phi_{I,J}$ for all $I, J \in \V(H)$ with $B_{I,J} \neq \emptyset$. Since $\hat \Phi : \hat G \to H$ is \bicv\ and $H$ is irreducible, $\hat G$ is the disjoint union of finitely many irreducible graphs. Therefore if $G$ is weakly connected, then there exists an irreducible component $\tilde G$ such that $G$ is a subgraph of $\tilde G$. Hence the restriction $\tilde \Phi$ of $\hat \Phi$ on $\tilde G$ is a \bicv\ homomorphism desired in (1). (Indeed, one can check that $\tilde G = \hat G$ in this case.)

    Suppose now $\lambda_G < \lambda_H$. Then each irreducible component of $\hat G$ has a \emph{new} $b$-edge for some edge $b$ in $H$. Let $\hat G_1, \cdots, \hat G_m$ be the irreducible components of $\hat G$ and let $e_k$ be a \emph{new} $b_k$-edge in $\hat G_k$ with $b_k$ = $\hat \Phi_\E(e_k)$ for $k = 1, \cdots, m$. Let $\hat G_0$ be a copy of $H$. We assume that for each edge $b$ in $H$, the copy $e_b$ (in $\hat G_0$) of $b$ is a \emph{new} $b$-edge. For $k = 0 , \cdots, m$, we define an irreducible graph $\tilde G_k$ inductively. Let $\tilde G_0 = \hat G_0$. Assuming that $\tilde G_{k-1}$ is an irreducible graph which has a \emph{new} $b$-edge for all edge $b$ in $H$, we define $\tilde G_k$ as follows: let the \emph{new} $b_k$-edge $e_k$ and one of the \emph{new} $b_k$-edges of $\tilde G_{k-1}$ exchange their terminal vertices; then we can merge $\hat G_k$ and $\tilde G_{k-1}$ into one irreducible graph $\tilde G_k$, which has a \emph{new} $b$-edge for all edge $b$ in $H$.

    Let $\tilde G = \tilde G_m$. We have a graph homomorphism $\tilde \Phi : \tilde G \to H$ which sends all \emph{old} and \emph{new} $b$-edges to $b$ for all edges $b$ in $H$. Then $\tilde \Phi$ is a \bicv\ extension of $\Phi$ with $\tilde G$ irreducible and $\deg \tilde \Phi = d + 1 = n$. Hence (2) is proved.
\end{proof}

The proof of Theorem \ref{thm:graph_extension}(1) shows that \emph{every} bi-resolving homomorphism can be extended to a bi-covering one with the same degree by enlarging the domain. We remark that Theorem \ref{thm:graph_extension} also holds if we replace  \emph{irreducible} with \emph{weakly connected}. Note that the assumption $\lambda_H > \lambda_G$ in the theorem is crucial. Indeed, an application of Perron-Frobenius theorem shows that if $H$ is irreducible, $G$ is not irreducible and $\lambda_G = \lambda_H$, then $\tilde G$ cannot be irreducible for any \bicv\ extension $\tPhi : \tilde G \to H$ of $\Phi$.

\begin{exam} \label{ex:equal_degree_is_impossible_in_general}
    Let $G$ and $H$ be graphs as below and $\Phi : G \to H$ a subscript dropping homomorphism. It is easy to check that there is no \bicv\ extension of $\Phi$ with degree 2 and with a weakly connected domain. This example shows that the assumption $n > \deg \Phi$ in Theorem \ref{thm:graph_extension}(2) is crucial.

    \vspace{-0.13cm}
    \begin{figure}[h]
        \center \includegraphics[height=2.8cm]{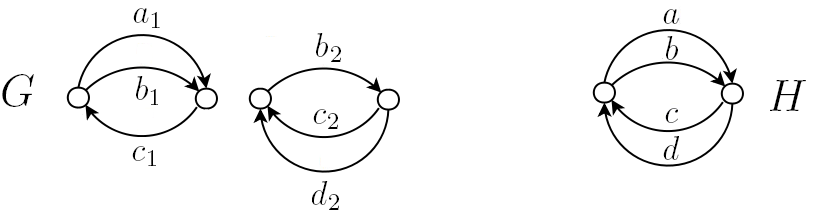}
        \vspace{-0.35cm}
        \caption{}
        \label{fig:same_degree_impossible}
    \end{figure}
\end{exam}

\section{Extension of bi-closing codes}

In this section, we investigate the extension property of bi-closing codes between general shift spaces. We prove that a \bic\ code between subshifts can be extended to an $n$-to-1 code between \rSFTs\ for all large $n$. When the domain is of finite type, we give a lower bound of degrees of extensions in the sense that there is $N \in \setN$ such that the above result holds for every $n \geq N$.

We recall some definitions. A \emph{shift space} (or \emph{subshift}) is a closed shift-invariant subset of a full shift. A subshift is \emph{indecomposable} if it is not the union of two disjoint nonempty subshifts \cite{BK}, and \emph{irreducible} if it has a dense forward orbit. For a subshift $X$, denote by $\B_n(X)$ the set of all words of length $n$ appearing in the points of $X$, and by $h(X)$ the topological entropy of $X$. A \emph{code} is a continuous shift-commuting map between shift spaces. A code is \emph{\rc} (resp. \emph{\lc}) if it never collapses two distinct left (resp. right) asymptotic points, and \emph{\bic} if it is both left and right closing. The \emph{edge shift} $\X_G$ is the set of all bi-infinite trips on a graph $G$. Every homomorphism $\Phi : G \to H$ induces the code $\phi : \X_G \to \X_H$ by letting $\phi(x)_i = \Phi(x_i)$. A subshift is called a \emph{shift of finite type} if it is conjugate to an edge shift.

We adopt the ideas from \cite[Lemma 2.4]{BK} to prove the following lemma on recoding.

\begin{lem}\label{lem:code-recoding_to_original_space} %\cite[Lemma 2.4]{BK}
    Let $X$ and $\bar X$ be shift spaces with $X \subset \bar X$ and $\phi : X \to Y$ a conjugacy. Then there exist a shift space $\bar Y \supset Y$ and a conjugacy $\bar \phi : \bar X \to \bar Y$ such that $\bar \phi|_X = \phi$.
\end{lem}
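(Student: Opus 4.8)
The plan is to build $\bar\phi$ as a higher-block-type recoding of the identity-like extension of $\phi$, following the idea of \cite[Lemma 2.4]{BK}. Since $\phi : X \to Y$ is a conjugacy, by the standard recoding theory it has a memory-anticipation window of some radius $m$: there is a sliding block code $\phi$ with coordinate map $f : \B_{2m+1}(X) \to \B_1(Y)$, and its inverse $\phi^{-1}$ has a coordinate map $g : \B_{2m+1}(Y) \to \B_1(X)$. The obstruction to extending $\phi$ to $\bar X$ directly is that $f$ is only defined on words that occur in $X$; a word of $\bar X$ of length $2m+1$ may not lie in $\B_{2m+1}(X)$, so there is no value for $\bar\phi$ to assign. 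The way around this is to enlarge the alphabet of the target rather than try to force $f$ to extend.

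First I would pass to the conjugate picture where $X$ and $\bar X$ sit over the same alphabet in a compatible way: replace $\bar X$ by its higher block shift $\bar X^{[2m+1]}$ over the alphabet $\B_{2m+1}(\bar X)$, and note that under this conjugacy $X$ is carried to $X^{[2m+1]} \subset \bar X^{[2m+1]}$. Now define a new alphabet $\cA = \B_1(Y) \sqcup \bigl(\B_{2m+1}(\bar X)\setminus \B_{2m+1}(X)\bigr)$, and define a $1$-block map $\Psi$ on $\bar X^{[2m+1]}$ by sending a symbol $w \in \B_{2m+1}(\bar X)$ to $f(w) \in \B_1(Y)$ if $w \in \B_{2m+1}(X)$, and to $w$ itself (viewed as a symbol of the second summand) otherwise. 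Let $\bar Y = \Psi(\bar X^{[2m+1]})$ and $\bar\phi = \Psi \circ (\text{block conjugacy } \bar X \to \bar X^{[2m+1]})$. By construction $\bar\phi$ restricted to $X$ agrees with $\phi$, because on $X^{[2m+1]}$ the map $\Psi$ is exactly the $1$-block recoding of $\phi$, and because $Y \subset \bar Y$: indeed $\Psi$ maps $X^{[2m+1]}$ onto $Y$ and the image of the remaining points is disjoint from $Y$ at no coordinate only if we argue a point of $\bar X \setminus X$ actually uses a new symbol — which needs a short argument, see below.

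The key remaining point is that $\bar\phi$ is injective, hence a conjugacy onto its image $\bar Y$. Injectivity of $\Psi$ on $\bar X^{[2m+1]}$: suppose $\Psi(x) = \Psi(x')$ for $x, x' \in \bar X^{[2m+1]}$. At each coordinate $i$ either both $x_i, x_i'$ are ``old'' symbols (lying in $\B_{2m+1}(X)$) with $f(x_i) = f(x_i')$, or both are ``new'' symbols with $x_i = x_i'$ — the two summands of $\cA$ being disjoint. On the subset of coordinates where the symbols are new we already have $x_i = x_i'$. On a maximal run of coordinates where the symbols are old, the underlying points locally lie in $X$ (a point whose every length-$(2m+1)$ subword is in $\B_{2m+1}(X)$ lies in $X$ by a standard compactness argument, but here I only need finite runs, which follows since $\B_{2m+1}(X)$-words concatenate compatibly inside $X^{[2m+1]}$); on such a run $\Psi$ coincides with the injective recoding of $\phi$, forcing $x_i = x_i'$ there too — one uses that $\phi^{-1}$ also has window $m$, so the value of $x_i$ is determined by $f(x_{i-m}),\dots,f(x_{i+m})$ provided those $2m+1$ coordinates are all in the run; the finitely many boundary coordinates of each run are handled by shrinking the window or by noting a new symbol at coordinate $i\pm(m{+}1)$ already pins down neighbors. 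I expect this last bookkeeping — propagating determinacy of symbols across the old/new boundary — to be the main obstacle, and it is exactly the place where the argument of \cite[Lemma 2.4]{BK} is adapted; everything else (that $\bar Y$ is a subshift as a factor of a subshift, that $\bar\phi$ is shift-commuting and continuous, that it restricts to $\phi$) is routine.
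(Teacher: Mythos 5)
Your construction is the same as the paper's in outline (adjoin the windows of $\bar X$ that are not $X$-words as new symbols, and map them identically), but the injectivity argument has a genuine gap, located exactly where you flagged "the main obstacle." You decide old versus new at coordinate $i$ by whether $w_{[i-m,i+m]}\in\B_{2m+1}(X)$, where $m$ is the radius of the block map $f$ of $\phi$ itself. To recover $w_i$ on a run of old symbols you must apply the inverse block map $g$ to the word $\bigl(f(w_{[j-m,j+m]})\bigr)_{j\in[i-m,i+m]}$, and for $g$ to return the correct answer this word must be $\phi(\tilde w)_{[i-m,i+m]}$ for some genuine $\tilde w\in X$ agreeing with $w$ on $[i-2m,i+2m]$; that requires $w_{[i-2m,i+2m]}\in\B_{4m+1}(X)$. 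Your justification --- that overlapping words of $\B_{2m+1}(X)$ ``concatenate compatibly'' --- is precisely the statement that $X$ is a $2m$-step shift of finite type, and it is false for a general shift space (in the even shift, $100$, $000$, $001$ overlap consistently but $10001$ is forbidden). When the long window is not an $X$-word, the string of output symbols need not be a $Y$-word, and even if it is, $g$ applied to it need not return $w_i$; so injectivity is not established. The fallback you offer ("shrinking the window," "a new symbol at $i\pm(m+1)$ pins down neighbors") does not address this, since the failure occurs entirely inside a run of old symbols.

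The paper's proof dodges this by a different allocation of the two radii. It first recodes $\phi$ to be a $1$-block code (by passing to higher block presentations of $X$ and $\bar X$ simultaneously, which is legitimate because the conclusion is transported back through the block conjugacy of $\bar X$), and then uses the radius $N$ of $\phi^{-1}$ --- not of $\phi$ --- as the window for the old/new decision and for the new symbols. Then a single "old" verdict at coordinate $i$ already certifies that $x_{[i-N,i+N]}$ is a genuine $X$-word; because $\phi$ is $1$-block, extending that word to a point $\tilde x\in X$ reproduces the entire output window $z_{[i-N,i+N]}$, so $g(z_{[i-N,i+N]})=\tilde x_i=x_i$, with no concatenation of short legal words ever needed. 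Your argument can be repaired the same way: either make $\phi$ $1$-block before introducing the new symbols, or enlarge the old/new window to radius at least $2m$ (radius of $\phi$ plus radius of $\phi^{-1}$). I note that in the one place the paper applies this lemma, $X$ and $\bar X$ are edge shifts ($1$-step SFTs), where your concatenation claim does hold; but the lemma is stated for arbitrary shift spaces, and as written your proof only covers the finite-type case.
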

\begin{proof}
    We may assume that $\phi$ is $1$-block and $\phi^{-1}$ has $N \in \setN$ as its  memory and anticipation, and that no $X$-word occurs as a symbol for $Y$. Define an alphabet $\A = \B_1(Y)~ \dot \cup~ ( \B_{2N+1}(\bar X) \setminus \B_{2N+1}(X))$. We will regard $u \in \B_{2N+1}(\bar X) \setminus \B_{2N+1}(X)$ as a symbol in $\A$. Define $\bar \phi : \bar X \to \A^\setZ$ by
    \[
        {\bar \phi (x) }_i =
            \begin{cases}
                \phi(x_i) & \text{if $x_{[-N+i,N+i]} \in \B_{2N+1}(X)$}   \\
                x_{[-N+i,N+i]} & \text{otherwise.}   \\
            \end{cases}
    \]
    and let $\bar Y = \bar \phi(\bar X)$. Note that if $\phi(x) = y$, then each $y_{[-N+i,N+i]}$ determines $x_i$ uniquely. Thus $\bar \phi$ is a conjugacy onto its image $\bar Y$.
\end{proof}

For a graph $G$ and $N \in \setN$, denote by $G^{[N]}$ the \emph{$N$-th higher graph} of $G$ \cite[\S 2.3]{LM}. A graph homomorphism $\Phi : G \to H$ naturally induces the graph homomorphism $\Phi^{[N]} : G^{[N]} \to H^{[N]}$ for each $N$. If $\Phi$ is \bir, then clearly so is $\Phi^{[N]}$. For a shift space $X$ and $N \in \setN$, denote by $X^{[N]}$ the \emph{$N$-th higher block shift} of $X$. Then $X$ is conjugate to $X^{[N]}$ by the conjugacy $\beta_{N,X} : X \to X^{[N]}$ where $\beta_{N,X} (x)_i = x_{[i,i+N-1]}$.
It is known that a code between \rSFTs\ is \cto\ if and only if it is conjugate to a code induced by a \bicv\ homomorphism \cite{Nas}. In this case the number of preimages of each point under the code is equal to the degree of the homomorphism. In what follows, a graph is called \emph{essential} if each vertex has an incoming edge and an outgoing edge.

\begin{lem} \label{lem:code-lowering_degree}
    Let $\Phi : G \to H$ be a \bir\ homomorphism where $G$ and $H$ are essential. Let $d = \max \{ |\phi^{-1}(y)| : y \in \X_H \}$, where $\phi : \X_G \to \X_H$ is the  code induced by $\Phi$. Then there exists $N \in \setN$ such that $\deg \Phi^{[N]} = d$.
\end{lem}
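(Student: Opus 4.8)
\emph{Plan.} I would show that the integer sequence $\bigl(\deg \Phi^{[N]}\bigr)_N$ is non-increasing and bounded below by $d$, so that it is eventually equal to some $d^\ast \ge d$, and then rule out $d^\ast > d$ by manufacturing a point $y \in \X_H$ with $d^\ast$ preimages under $\phi$, contradicting the definition of $d$.

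\emph{Setup and monotonicity.} Recall that $\V(H^{[N]})$ is the set of length-$(N-1)$ paths in $H$ (all of them, since $H$ is essential), and $\Phi^{[N]}_\V$ sends a length-$(N-1)$ path $\pi$ in $G$ to $\Phi_\E(\pi)$; so $\deg \Phi^{[N]}$ is the maximum, over length-$(N-1)$ paths $\rho$ in $H$, of the number of length-$(N-1)$ paths in $G$ carried onto $\rho$ by $\Phi_\E$ (the \emph{lifts} of $\rho$). The one elementary fact driving everything — and the only place both resolving hypotheses are used — is that restriction is injective on lifts: if $\sigma$ is a sub-path of $\rho$, two distinct lifts of $\rho$ restrict to distinct lifts of $\sigma$. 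Indeed, a lift of $\rho$ extending a given lift of $\sigma$ is forced to the right (each additional edge is, by right-resolvingness, the unique edge over the relevant edge of $\rho$ leaving the current terminal vertex) and forced to the left (by left-resolvingness). Hence the number of lifts of $\rho$ is at most the number of lifts of any sub-path of $\rho$, which gives $\deg \Phi^{[N+1]} \le \deg \Phi^{[N]}$. Conversely, if $y \in \X_H$ has $d$ distinct bi-infinite lifts in $\X_G$, then by bi-resolvingness any two of these lifts differ at every coordinate, so their length-$(N-1)$ central windows are $d$ distinct lifts of $y_{[0,N-2]}$; thus $\deg \Phi^{[N]} \ge d$. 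So the sequence is non-increasing, integer-valued, and bounded below by $d$, hence eventually constant, equal to some $d^\ast \ge d$.

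\emph{Ruling out $d^\ast > d$.} Suppose $d^\ast > d$ and fix $M_0$ with $\deg \Phi^{[M]} = d^\ast$ for $M \ge M_0$. For each large $N$ pick a path $w^{(N)}$ of length $2N+1$ in $H$ with exactly $d^\ast$ lifts. Since every coordinate ranges over the finite edge set $\E(H)$, a diagonal argument lets me pass to a subsequence along which the central coordinates of $w^{(N)}$ stabilize to a bi-infinite path $y = (y_i)_{i\in\setZ}$, which lies in $\X_H$ because $H$ is essential. For every $M$ the window $y_{[-M,M]}$ is a sub-path of $w^{(N)}$ for all large $N$, so by injectivity of restriction it has at least $d^\ast$ lifts; and for $M$ large it has at most $\deg \Phi^{[2M+2]} = d^\ast$ lifts, hence exactly $d^\ast$. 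Writing $L_M$ for the set of lifts of $y_{[-M,M]}$, the restriction maps $L_{M+1} \to L_M$ are injective, hence bijective once $M$ is large; an element of the inverse limit $\varprojlim_M L_M$ assembles into a bi-infinite path in $G$ mapping to $y$ under $\phi$, and distinct inverse-limit elements yield distinct such paths. Therefore $|\phi^{-1}(y)| \ge d^\ast > d$, a contradiction. Thus $d^\ast = d$, and $\deg \Phi^{[N]} = d$ for all sufficiently large $N$.

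\emph{Main obstacle.} The bookkeeping for monotonicity and the diagonal extraction are routine; the point that needs care is the injectivity-of-restriction observation and its consequence that the inverse system $(L_M)$ of finite lifts stabilizes. This is exactly what prevents the $d^\ast$ distinct finite lifts from merging in the bi-infinite limit, and it is the single step where both left- and right-resolvingness are indispensable (a merely right-closing $\Phi$ would not suffice).
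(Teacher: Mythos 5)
Your proof is correct and follows essentially the same route as the paper's: both rest on the fact that bi-resolvingness forces lifts of a path to extend uniquely in both directions (so distinct lifts never merge under restriction), combined with a compactness/diagonalization argument turning long paths with too many lifts into a bi-infinite point with too many $\phi$-preimages. Your monotonicity-plus-inverse-limit packaging simply makes explicit what the paper compresses into ``it follows from compactness'' and the conjugacy between $\phi$ and $\phi^{[N]}$.
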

\begin{proof}
    Since $\Phi$ is \bir, the preimages of a given path must be mutually separated. Since $G$ and $H$ are essential, it follows from compactness that there is $n \in \setN$ such that $|\PhiEi(\pi)| \leq d$ for all paths $\pi$ of length greater than $n$. Take $N = n+2$. Then $| ( \Phi^{[N]})_\V ^{-1}(J)| \leq d$ for all $J \in \V(G^{[N]})$. Thus $\deg \Phi^{[N]} \leq d$.

    Suppose $\deg \Phi^{[N]} < d$ and let $\phi^{[N]}$ be the code induced by $\Phi^{[N]}$. Since $\Phi^{[N]}$ is \bir, the number of preimages of a point under $\phi^{[N]}$ must be less than $d$. This contradicts that $\phi$ is conjugate to $\phi^{[N]}$. Thus $\deg \Phi^{[N]} = d$.
\end{proof}

Now we prove the main theorem of this section which says that, in a sense, every bi-closing code sits in a \cto\ code between \rSFTs.

\begin{thm} \label{thm:biclosing_extension}
    Let $X$ be a shift space, $Y$ an \rSFT\ with $h(X) < h(Y)$, and $\phi : X \to Y$ a \bic\ code. Let $d = \max \{ |\phi^{-1}(y)| : y \in Y \}$.
    \begin{enumerate}
        \item
            If $X$ is an indecomposable \SFT, then for all $n \geq d$, there exist an \rSFT\ $\tilde X \supset X$ and an $n$-to-1, onto extension $\tilde \phi : \tilde X \to Y$ of $\phi$.
        \item
            If $X$ is of finite type, then the conclusion of (1) holds if ``for all $n \geq d$'' is replaced by ``for all $n \geq d+1$''.
        \item
            If $X$ is a shift space, then the conclusion of (1) holds if ``for all $n \geq d$'' is replaced by ``for all $n \geq m$ with some $m \geq d$''.
    \end{enumerate}
\end{thm}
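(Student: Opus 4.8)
The plan is to reduce each case to the graph-theoretic Theorem~\ref{thm:graph_extension} via a sequence of recodings, handling the three cases in increasing order of generality. The common strategy: replace $\phi$ by a conjugate code induced by a bi-resolving graph homomorphism, apply Theorem~\ref{thm:graph_extension}(1) or (2) to obtain a bi-covering extension with an irreducible domain graph, translate back using Lemma~\ref{lem:code-recoding_to_original_space} to recover an extension of the \emph{original} code $\phi$, and finally observe that a code induced by a bi-covering homomorphism from an irreducible graph is $n$-to-$1$ onto (by the remark preceding Lemma~\ref{lem:code-lowering_degree}, Nasu's theorem), where $n$ is the degree.

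\medskip

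\noindent\textbf{Case (1): $X$ an indecomposable SFT.} First I would recode so that $\phi$ is a $1$-block code from an edge shift $\X_G$ onto an edge shift $\X_H$, induced by a bi-resolving homomorphism $\Phi:G\to H$ with $G,H$ essential; here I use that a bi-closing code between SFTs is conjugate to one induced by a bi-resolving homomorphism (a standard recoding, cf.\ \cite{Nas, LM}), and that indecomposability of the SFT $X$ translates into $G$ being weakly connected (after passing to higher edge graphs one may assume $G$ weakly connected with $H$ irreducible, since $Y$ is an \rSFT). Next, by Lemma~\ref{lem:code-lowering_degree}, after replacing $G$ by a higher graph $G^{[N]}$ I may assume $\deg\Phi = d$. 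Given $n\ge d$: if $n=d$, apply Theorem~\ref{thm:graph_extension}(1) to get a bi-covering extension $\tilde\Phi:\tilde G\to H$ with $\tilde G$ irreducible and $\deg\tilde\Phi = d = n$; if $n>d$, note $h(X)<h(Y)$ gives $\lambda_G<\lambda_H$, so apply Theorem~\ref{thm:graph_extension}(2) to get $\tilde\Phi:\tilde G\to H$ bi-covering, $\tilde G$ irreducible, $\deg\tilde\Phi = n$. The induced code $\tilde\psi:\X_{\tilde G}\to\X_H$ is $n$-to-$1$ onto and extends the conjugate-of-$\phi$. Finally, pull the conjugacies back through Lemma~\ref{lem:code-recoding_to_original_space}: the conjugacy $X\to\X_G$ extends to a conjugacy $\tilde X\to\X_{\tilde G}$ for a suitable \rSFT\ $\tilde X\supset X$, and composing yields $\tilde\phi:\tilde X\to Y$, an $n$-to-$1$ onto extension of $\phi$.

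\medskip

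\noindent\textbf{Case (2): $X$ of finite type (possibly decomposable).} The obstruction to $n=d$ is exactly the possible failure of weak connectedness of $G$ (cf.\ Example~\ref{ex:equal_degree_is_impossible_in_general} and the Perron--Frobenius remark after Theorem~\ref{thm:graph_extension}), so I no longer claim $n=d$. For $n\ge d+1$, after recoding as above to $\Phi:G\to H$ bi-resolving with $\deg\Phi = d$ (Lemma~\ref{lem:code-lowering_degree}) and with $\lambda_G<\lambda_H$, I apply Theorem~\ref{thm:graph_extension}(2) with the given $n>d$ to obtain a bi-covering $\tilde\Phi:\tilde G\to H$, $\tilde G$ irreducible, $\deg\tilde\Phi = n$ (Theorem~\ref{thm:graph_extension}(2) does not require weak connectedness of $G$). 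Then proceed exactly as in Case (1), transferring back via Lemma~\ref{lem:code-recoding_to_original_space}.

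\medskip

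\noindent\textbf{Case (3): $X$ an arbitrary shift space.} Here $X$ need not be of finite type, so I first replace $X$ by an SFT cover. Since $Y$ is an \rSFT\ and $\phi$ is bi-closing with $h(X)<h(Y)$, and since every subshift is a factor of an SFT, I would take an SFT $X'$ mapping onto $X$ (or, more carefully, build an SFT $\hat X$ together with a bi-closing code $\hat X\to Y$ extending a recoded $\phi$ in the appropriate sense); concretely one realizes the finitely many ``closing'' constraints of $\phi$ using a higher block presentation, enlarging $X$ to an SFT $\hat X\supset X$ on which $\phi$ still makes sense and is bi-closing with the same maximal fiber size — this is where the bi-closing hypothesis and $h(X)<h(Y)$ are used together, and it is the step I expect to be the main obstacle, since one must not inflate the entropy past $h(Y)$ nor increase $\deg$ beyond control. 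Once $\hat X$ is an SFT with $h(\hat X)<h(Y)$ and a bi-closing extension $\hat\phi:\hat X\to Y$ of $\phi$ with $\max|\hat\phi^{-1}(y)| = m$ for some $m\ge d$, Case (2) applies to $\hat\phi$: for every $n\ge m+1$ (and, if $\hat X$ can be taken indecomposable, for every $n\ge m$) there is an \rSFT\ $\tilde X\supset\hat X\supset X$ and an $n$-to-$1$ onto extension $\tilde\phi:\tilde X\to Y$ of $\hat\phi$, hence of $\phi$. Taking this $m$ gives the statement ``for all $n\ge m$ with some $m\ge d$.''

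\medskip

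The main difficulty throughout is bookkeeping the chain of recodings so that an extension of a \emph{conjugate} of $\phi$ descends to an extension of $\phi$ itself; Lemma~\ref{lem:code-recoding_to_original_space} is precisely designed for this, and the only genuinely new work beyond invoking it repeatedly is, in Case (3), producing the SFT cover $\hat X$ of $X$ that keeps $\phi$ bi-closing and keeps the entropy below $h(Y)$.
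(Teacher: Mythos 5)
Your treatment of parts (1) and (2) is essentially the paper's proof: recode $\phi$ to a $1$-block code induced by a bi-resolving homomorphism $\Psi : G \to H$ with $G$ essential and $H$ irreducible, pass to a higher graph so that $\deg \Psi^{[N]} = d$ (Lemma \ref{lem:code-lowering_degree}), apply Theorem \ref{thm:graph_extension}(1) when $n=d$ and $X$ is indecomposable (so that $G$ is weakly connected) and Theorem \ref{thm:graph_extension}(2) when $n>d$ (using that $h(X)<h(Y)$ gives $\lambda_G<\lambda_H$), and then transfer the enlarged domain back through Lemma \ref{lem:code-recoding_to_original_space}. Nothing to add there.

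In part (3) you correctly identify both the right shape of the reduction (a shift of finite type $\hat X \supset X$ to which $\phi$ extends as a bi-closing code with entropy still below $h(Y)$) and the fact that this is the only genuinely new step, but you leave the construction open, and that is where the remaining work lies. The paper's resolution is the $k$-step Markov approximation: write $\phi$ as an $M$-block map $\Phi$, let $N$ be such that $Y^{[N]}$ is an edge shift, and for $k \geq M+N$ let $X_k$ be the shift of finite type over $\A=\B_1(X)$ with forbidden blocks $\A^k\setminus\B_k(X)$. Then $X_{k+1}\subset X_k$, $\bigcap_k X_k = X$, and the \emph{same} block map $\Phi$ defines a code $\phi_k : X_k \to Y$ extending $\phi$ (the image lands in $Y$ because every sufficiently long block of $X_k$ is an $X$-block and $Y$ is determined by its $N$-blocks). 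The two properties you worried about then follow from approximation and compactness: $h(X_k)\to h(X)<h(Y)$, so $h(X_k)<h(Y)$ for large $k$; and since $\B_j(X_k)=\B_j(X)$ for $j\le k$, a standard compactness argument shows that if $\phi_k$ failed to be bi-closing for arbitrarily large $k$, one could extract a pair of distinct asymptotic points of $X$ with the same $\phi$-image, contradicting that $\phi$ is bi-closing. Taking such a $k$ and applying your case (2) to $\phi_k$, with $m = \max_{y}|\phi_k^{-1}(y)|+1 \ge d+1$, finishes the proof. (Your first instinct of an SFT factor cover mapping \emph{onto} $X$ would not work as stated, since the theorem demands $\tilde X\supset X$ and an extension of $\phi$ itself rather than of a lift of $\phi$; your self-correction to a superset $\hat X\supset X$ is the right move.)
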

\begin{proof}
    (1)(2) Since $X$ and $Y$ are \SFTs\ and $Y$ is irreducible, using the higher block presentations and the recoding construction of \cite[\S 4.3]{Kit}, we know that there exist an essential graph $G$, an irreducible graph $H$, a \bir\ homomorphism $\Psi : G \to H$, a conjugacy $\alpha : X \to \X_{G}$, and a higher block code $\beta : Y \to \X_{H}$ such that $\phi = \beta^{-1} \psi \alpha$, where $\psi$ is the 1-block code induced by $\Psi$. By Lemma \ref{lem:code-lowering_degree}, there exists $N \geq 1$ such that $\deg \Psi^{[N]} = d$. Letting $G_1 = G^{[N]}, H_1 = H^{[N]}$ and $\Psi_1 = \Psi^{[N]}$, we have $\phi = \beta^{-1} \beta^{-1}_{N,\X_H} \psi_1 \beta_{N,\X_G} \alpha$, where $\psi_1$ is the 1-block code induced by $\Psi_1$.

    By Theorem \ref{thm:graph_extension}, there exists a \bicv\ extension $\tilde \Psi_1 : \tilde G_1 \to H_1$ of $\Psi_1$ with $\tilde G_1$ irreducible and $\deg \tilde \Psi_1 = n$. By Lemma \ref{lem:code-recoding_to_original_space}, there exist an \rSFT\ $\tilde X$ with $\tilde X \supset X$ and a conjugacy $\theta : \tilde X \to \X_{\tilde G_1}$ such that $\theta|_{X} = \beta_{N,\X_G} \alpha $. If we let $\tilde \phi = \beta^{-1} \beta^{-1}_{N,\X_H} \tilde \psi_1 \theta$, where $\tilde \psi_1$ is the 1-block code induced by $\tilde \Psi_1$, then $\tilde \phi$ is an extension desired in (1) and (2).

    (3) Let $\cA = \B_1(X)$. Then $X$ is a subshift over the alphabet $\cA$. Define $\phi$ by an $M$-block map $\Phi : \B_M(X) \to \B_1(Y)$ with memory $m$ and anticipation $a$ with $m + a + 1 = M$ and let $N$ be a number such that $Y^{[N]}$ is an edge shift. For $k \geq M + N$, let $X_k$ be the \SFT\ defined by the set $\cF_k = \A^k \setminus \B_k(X)$ of forbidden blocks (i.e., $X_k$ is the $k$-step Markov approximation of $X$). Then we can define the code $\phi_k : X_k \to Y$ with the $M$-block map $\Phi$ with memory $m$ and anticipation $a$ (note that $\phi_k(X_k) \subset Y$). Clearly $X_{k+1} \subset X_k$ for all $k$ and $X = \bigcap_k X_k$. Since $h(X) < h(Y)$, for all large $k$ we have $h(X_k) < h(Y)$. Since $\phi$ is \bic, it follows by a standard compactness argument that $\phi_k$ is \bic\ for all large $k$. Therefore we can apply (2) for $\phi_k$ with sufficiently large $k$ to prove (3).
\end{proof}

This theorem may be viewed as an another aspect of the extension result in \cite[\S 4]{AshMPT}. Indeed it gives more information except for the closing delay which is defined only for 1-block codes on 1-step \SFTs. Note that in Theorem \ref{thm:biclosing_extension}, if $X$ and $Y$ are mixing then so is $\tilde X$.

%Let $X$ and $Y$ be \mSFTs\ and $Z \subsetneq X$ of finite type. As stated in \S 1, if there is a \rc\ factor code from $X$ to $Y$, then any \rc\ code from $Z$ to $Y$ can be extended to a \rc\ factor code on $X$ \cite{Ash93}. The following example shows that an analogous statement for \bic\ codes is not true.

Our last example shows that we cannot improve the extension theorem of Ashley \cite{Ash93} by replacing \rc\ with \bic.

\begin{exam} \label{ex:biclosing_extension_fail}
    Let $X = Y = \X_A$ where $A = \left( \begin{smallmatrix} 1 & 2 \\ 1 & 0 \end{smallmatrix} \right)$ and $Z$ be a periodic orbit of $X$ of length greater than 1. Let $\phi : Z \to Y$ be the code that maps every point of $Z$ to the unique fixed point of $Y$. Clearly $\phi$ is \bic. If there is a \bic\ extension $\tilde \phi : X \to Y$ of $\phi$, then it must be constantly $d$-to-1 with $d > 1$. However, since $-1$ is an eigenvalue of $A$, it follows that $X$ only admits endomorphisms of degree one \cite{Tro90}, which is a contradiction. Thus $\phi$ cannot be extended to a \bic\ code from $X$ onto $Y$.
\end{exam}

\begin{ack*}
    This paper was written during the authors' graduate studies. We would like to thank our advisor Sujin Shin for her encouragement and good advice. We thank the referee for thorough suggestions which simplified and clarified many proofs, and Mike Boyle for comments which improved the exposition. This work was supported by the second stage of the Brain Korea 21 Project, The Development Project of Human Resources in Mathematics, KAIST in 2009.
\end{ack*}

\bibliographystyle{amsplain}
\bibliography{_Bib_Biresolving}

\end{document}